\documentclass{amsart}

\usepackage{xcolor}

\usepackage{amsfonts}
\usepackage{amssymb}
\usepackage{graphicx}
\usepackage{mathrsfs}
\usepackage{amscd}
\usepackage{hyperref}
\usepackage{stmaryrd}
\usepackage{caption}
\usepackage{subcaption}
\usepackage{color}
\usepackage{xcolor}
\usepackage{comment}
\usepackage{bm,bbm}
\usepackage{stmaryrd}
\usepackage{euscript}
\usepackage{microtype}
\usepackage{enumitem}

\usepackage{tikz} 
\usepackage{tikz-cd}
\usepackage{mathtools}

\newtheorem{theorem}{Theorem}[section]

\newtheorem{corollary}[theorem]{Corollary}

\newtheorem{prop}[theorem]{Proposition}
\newtheorem{defn}[theorem]{Definition}
\newtheorem{example}[theorem]{Example}
\newtheorem{remark}[theorem]{Remark}

\numberwithin{equation}{section}

\newcommand{\End}{\mathrm{End}}

\newcommand{\Id}{\mathrm{Id}}

\newcommand{\Ad}{\mathrm{Ad}}

\title{Moment Lagrangians, unobstructedness and symplectic groupoids}

\author[Li]{Yan-Lung Leon Li}
\address{Center of Geometry and Physics, Institute for Basic Science (IBS), Pohang 37673, Korea}
\email{ylli@ibs.re.kr}

\begin{document}
\maketitle

\begin{abstract}
Moment Lagrangian $L_\mu$ is a Lagrangian in $T^*G^- \times Y^- \times Y$ associated to a Hamiltonian $G$-space $Y$ with a moment map $\mu$. In this paper, we prove that $L_\mu$ is tautologically unobstructed under mild assumptions on $Y$. 

As a key ingredient in the proof, we constructed a new symplectic groupoid structure on $T^*G^- \times Y^- \times Y$ over $G\times Y$ for which $L_\mu$ is simultaneously the unit and the fixed locus of the inversion,  which might be of independent interest. 
\end{abstract}

\section{Introduction}
Symplectic manifolds are the fundamental geometric objects underlying Hamiltonian mechanics, whose origin can be traced back to the classical works of Lagrange. Through the important works of Kostant, Kirillov and Souriau in the 1960s, it became clear that the natural geometric framework for Hamiltonian mechanics with symmetry group $G$ \footnote{Throughout the paper, $G$ is a compact, connected Lie group.} is provided by Hamiltonian $G$-spaces $(Y, \omega, \mu)$, where $(Y,\omega)$ is a symplectic manifold  with a Hamiltonian $G$-action in the sense that it admits a $G$-equivariant moment map $\mu$ satisfying the Hamilton's equation\footnote{See \ref{hamilton} for the precise formula.}.

Meanwhile, in \cite{Weinsteinsympgeom}, Weinstein proposed his famous symplectic creed, namely ``Everything is a Lagrangian submanifold", which led to his formulation of the symplectic ``category" $\mathrm{Symp}$. 
Roughly speaking, its objects are symplectic manifolds, its morphisms are Lagrangian correspondences between them, and the composition of morphisms is given by geometric composition of correspondences. In this framework, Hamiltonian $G$-spaces $(Y, \omega, \mu)$ can be incorporated naturally through their moment Lagrangians
$$L_\mu \coloneqq \{(g, \mu(y), y, g\cdot y) | g\in G; y \in Y\} \subseteq (T^*G)^- \times Y^- \times Y \footnote{In the introduction, we write $T^*G$ for simplicity and easier comparison with existing literature; in later sections, we will replace $T^*G$ by $G \times \mathfrak{g}^*$ 
to avoid confusions, since its symplectic form $\Omega$ will be chosen to be correspond to $-\omega_{can}$; see Definition \ref{def:momentlag} for details.}$$
as a Lagrangian correspondence from $T^*G$ to  $Y^- \times Y$, providing a realization of the action as a morphism in $\mathrm{Symp}$. This perspective has led to profound applications in quantization and semi-classical analysis; see, for instance, \cite{GSsemiclassical} and the references therein.

On the other hand, in his ICM address \cite{Teleman}, Teleman proposed using $L_\mu$ to construct a 
topological $G$-action on the Fukaya category $\mathrm{Fuk}(Y)$ of $Y$. A key intermediate step is to use $L_\mu$ 
to induce a functor from the wrapped Fukaya category of $T^*G$ to the Fukaya category of $Y^-\times Y$, 
which the latter acts on $\mathrm{Fuk}(Y)$ via geometric transformations. Teleman's proposal has since inspired substantial interests in the Lagrangian Floer theory of $L_\mu$ and its applications, resulting in important developments in the monotone setting, including \cite{EL, BCL, Chowmomentcorr} to name a few.

Meanwhile, to the best of the author's knowledge, the Floer theory of $L_\mu$ for more general symplectic manifolds, in terms of foundational results of Fukaya, Oh, Ohta and Ono in their series of works on Lagrangian Floer theory (e.g. \cite{FOOO,foookuranishi}), seems to remain largely unexplored. In particular, a fundamental question is to determine whether or not $L_\mu$ is (Floer-theoretically) unobstructed, in the sense that its (Lagrangian) Floer complex $CF(L_\mu)$ is an unobstructed $A_\infty$ algebra (in particular a chain complex) after some bulk or boundary deformations\footnote{See Remark \ref{bdydeform} for the notion of boundary deformation by bounding cochains.} so that its Floer cohomology $HF(L_\mu)$ is well-defined after such deformations. Establishing unobstructedness is therefore a necessary first step towards any applications of the Floer theory of $L_\mu$.

In this work, we prove the tautogical unobstructedness of $L_\mu$ under some mild assumptions as follows:

\begin{theorem}\label{momentlagrangianunobstructed}
Given a closed, connected, spin Hamiltonian $G$-space $(Y, \omega, \mu)$, its moment Lagrangian $L_\mu$ is tautologically unobstructed. 
\end{theorem}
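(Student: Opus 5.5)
We will use the strategy the abstract announces: realize $L_\mu$ as the fixed locus of an anti-symplectic involution and then invoke Solomon's result. In the form we need, Solomon's theorem says the following. Let $L$ be a closed spin Lagrangian that is a connected component of the fixed locus of an anti-symplectic involution $\tau$ on a symplectic manifold $X$ with bounded geometry. Then the $A_\infty$ operations $\mathfrak m_k$, $k\ge 0$, on $CF(L)$ built with $\tau$-invariant perturbation data satisfy $\mathfrak m_k(\alpha_1,\dots,\alpha_k)$ has vanishing odd-degree part. Equivalently, $\mathfrak m_0$ has no obstruction component, so $b=0$ is a bounding cochain. This is "tautological unobstructedness", and it goes back to Fukaya--Oh--Ohta--Ono's involution theorem.

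So the plan has three steps.

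\textbf{Step 1: the groupoid.} We equip $X=(G\times\mathfrak g^*)\times Y^-\times Y$ with a symplectic groupoid structure over $G\times Y$. The goal is that $L_\mu$ is both the image of the unit embedding and the fixed set of the inversion $\iota$. The inversion of any symplectic groupoid is automatically anti-symplectic, since the graph of multiplication is Lagrangian in $X\times X\times X^-$. The unit manifold is always Lagrangian.

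Concretely, we start from the action groupoid $T^*G\ltimes$ pair groupoid, or from the twisted product of $T^*G\rightrightarrows\mathfrak g^*$ with $Y^-\times Y\rightrightarrows Y$. We then apply the moment-map twist
\[(g,\xi,y,y')\mapsto (g,\xi+\mu(y),y,g y)\text{-type coordinates},\]
so that the identity bisection becomes $\{\xi=\mu(y),\,y'=gy\}$. The inversion should then be
\[\iota(g,\xi,y_1,y_2)=\bigl(g,\;\Ad^*\text{-corrected }\xi',\;g^{-1}y_2,\;g y_1\bigr),\]
adjusted so that $\iota^2=\Id$ and $\mathrm{Fix}(\iota)=L_\mu$.

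We check $\iota^*\Omega_X=-\Omega_X$ directly using Hamilton's equation $d\langle\mu,\zeta\rangle=\iota_{\zeta_Y}\omega$ and equivariance of $\mu$. This is a Maurer--Cartan-form computation on $G$.

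\textbf{Step 2: the fixed locus.} We verify that $\mathrm{Fix}(\iota)$ equals $L_\mu$ exactly, or that $L_\mu$ is a union of its components. The differential of $\iota$ along the unit is the standard splitting $T L_\mu\oplus N$, with $d\iota=-1$ on $N$, so $L_\mu$ is a component.

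Spin and compactness come from $L_\mu\cong G\times Y$. Here $G$ is parallelizable and $Y$ is spin, so $L_\mu$ is spin. Since $Y$ is closed, $L_\mu$ is compact.

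\textbf{Step 3: analytic input on the noncompact target.} $X$ is noncompact because of the $\mathfrak g^*$ factor, so we must ensure compactness of moduli spaces. We choose a $\iota$-invariant almost complex structure that is convex at infinity in the $\mathfrak g^*$ direction, for example one of contact type near $\{|\xi|=R\}$. With it, the maximum principle confines holomorphic disks bounded by $L_\mu$ to a compact region, and we then average to get $\iota$-invariance. Solomon's theorem applies verbatim, with the relative spin structure compatible with $\iota$; since $\iota$ fixes $L_\mu$ pointwise, the induced spin structure is the same.

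We expect the main obstacle to be Step 1: finding a groupoid structure whose inversion squares to the identity and whose fixed set is precisely $L_\mu$ rather than something larger. The naive candidate $(g,\xi,y_1,y_2)\mapsto(g^{-1},\dots)$ has fixed points only near involutions in $G$. The fix is to use the twisted "pair-type" structure, which keeps $g$ fixed and swaps and translates the $Y$ factors, and to verify the groupoid axioms by transporting a known groupoid along an explicit symplectomorphism.
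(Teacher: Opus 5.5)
Your overall route is the paper's: use the groupoid to get an anti-symplectic involution with fixed locus exactly $L_\mu$, then apply the Fukaya--Oh--Ohta--Ono involution machinery. The gap is that the black box you quote is false as stated.

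A closed spin Lagrangian fixed by an anti-symplectic involution need not have $\mathfrak m_0(1)=0$. The equator $S^1\subset S^2$ is closed, spin, and the fixed locus of a reflection. Its two hemispheres have Maslov index $2$ and area $A/2$ each, and the reflection interchanges them. They contribute with the \emph{same} sign, so $\mathfrak m_0(1)=\pm 2T^{A/2}\cdot 1\neq 0$.

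What the involution actually gives is an identification $\mathcal M_1(\beta)\cong\mathcal M_1(\tau_*\beta)$. It is compatible with evaluation at the boundary marked point, since $1\in\partial D$ is fixed by conjugation and $\tau|_{L}=\mathrm{id}$, and it has orientation sign $(-1)^{\mu(\beta)/2+1}$. Since $\omega(\tau_*\beta)=\omega(\beta)$ and $\mu(\tau_*\beta)=\mu(\beta)$, the two contributions to $\mathfrak m_0(1)$ cancel when $\mu(\beta)\equiv 0\pmod 4$ but \emph{add} when $\mu(\beta)\equiv 2\pmod 4$. So you must show that every class in $\pi_2(X,L_\mu)$ has Maslov index divisible by $4$, and nothing in your Steps 1--3 does this.

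This is exactly what the extra checks in the paper's proof supply, before it cites \cite[Corollary 1.6]{FOOOinvol}:
First, $X$ is spin, because $T^*G$ is parallelizable and $Y$ is spin. Hence $c_1(X)$ reduces mod $2$ to $w_2(X)=0$.
Second, $\pi_1(L_\mu)\to\pi_1(X)$ is injective, because $e\circ t$, i.e.\ $(g,\xi,y,z)\mapsto(g,\mu(y),y,g\cdot y)$, is a retraction of $X$ onto $L_\mu$.

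By exactness of $\pi_2(X)\to\pi_2(X,L_\mu)\to\pi_1(L_\mu)\to\pi_1(X)$, every disc class $\beta$ is the image of a sphere class $\alpha$. Hence $\mu(\beta)=2c_1(\alpha)\in 4\mathbb Z$. The pairwise cancellation, with $\tau$-invariant perturbation data, then gives $\mathfrak m_0(1)=0$, i.e.\ $b=0$. So the spin hypothesis on $Y$ is used twice. It orients the moduli spaces, as in your proposal, and, via $X$, it controls Maslov indices mod $4$.

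The rest of your plan is fine. In Step 1, your transport idea works cleanly with the twist $(g,\xi,y,y')\mapsto(g,\xi+\mu(y'),y,g\cdot y')$. By Hamilton's equation and equivariance of $\mu$, this is a symplectomorphism of $(X,\omega_X)$. It carries the product of the fibrewise-additive groupoid $G\times\mathfrak g^*\rightrightarrows G$ with the pair groupoid of $Y$ onto the paper's groupoid. It also conjugates $(g,\xi,y,y')\mapsto(g,-\xi,y',y)$ to $i(g,\xi,y,z)=(g,\mu(y)+\mu(g^{-1}z)-\xi,g^{-1}z,g\cdot y)$. Twisting by $\mu(y)$ instead, as you wrote, gives the same structure maps but is not symplectic.
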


\begin{remark}
In \cite[Lemma 6.1.1]{EL}, Evans and Lekili established the monotonicity of $L_\mu$ under the additional assumptions that $Y$ is simply-connected and monotone, in order to construct an $A_\infty$ functor from the wrapped Fukaya category of $T^*G$ to the monotone Fukaya category $\mathrm{Fuk}(Y^- \times Y)$. This has played a pivotal role in computing $\mathrm{Fuk}(Y)$ in terms of determining its split generators. 

From this perspective, Theorem \ref{momentlagrangianunobstructed} shows that the unobstructedness of $L_\mu$ persists beyond monotone setting. Moreover, combined with Fukaya's result in \cite[Theorem 1.7]{Fukaya-corr}, it yields an $A_\infty$ functor between their (compact, unobstructed) Fukaya categories. It would be interesting to see if this functor would also be useful in computing $\mathrm{Fuk}(Y)$ in the sense of \cite{AFOOO}.
\end{remark}

\begin{remark}
    This result should not be confused with the unobstructedness result of Xiao in \cite{xiao2}. In the loc. cit, the term ``moment Lagrangian correspondence" refers to the correspondence between a Hamiltonian $G$-space $Y$ and its symplectic quotient $Y \sslash_0 G$ (which is called ``moment level correspondence" in \cite{lauleungli}).
\end{remark}

As an application of Theorem \ref{momentlagrangianunobstructed}, together with Fukaya's unobstructedness theorem on geometric transformations, we obtain the unobstructedness of the following Lagrangians:

\begin{corollary}\label{cor: exunoblag}
    \begin{enumerate}
        \item Given a closed, connected, spin Hamiltonian $G$-space $(Y, \omega, \mu)$ such that $G$ acts freely on $\mu^{-1}(0)$, the following Lagrangian is unobstructed: $$\Delta_\mu \coloneqq  \{(y, g\cdot y) \in Y^- \times Y | g\in G; y\in \mu^{-1}(0)\} \subset Y^- \times Y.$$
        \item Given a coadjoint orbit $\mathcal{O}_\lambda$ of $G$, the following Lagrangian is unobstructed: $$L_{\mathcal{O}_\lambda} \coloneqq  \{(g, \Ad^*_g(\xi)) \in G \times \mathfrak{g}^* | g\in G; \xi\in \mathcal{O}_\lambda\}\subset G \times \mathfrak{g}^*.$$
    \end{enumerate}
\end{corollary}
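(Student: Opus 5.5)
The plan is to derive both statements from Theorem \ref{momentlagrangianunobstructed} by realizing each Lagrangian as a geometric composition of $L_\mu$ with a simple, unobstructed Lagrangian in $T^*G$. We then invoke Fukaya's theorem \cite[Theorem 1.7]{Fukaya-corr}: geometric transformations of unobstructed Lagrangians along unobstructed Lagrangian correspondences are unobstructed, provided the composition is embedded, i.e.\ the fiber product is transversal and the projection is injective. The key steps are to choose the right input Lagrangian in each case, verify transversality and embeddedness, and check the spin and closedness hypotheses.

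For (1), take the zero section $G\times\{0\}\subset G\times\mathfrak g^*$ (with the sign convention of Definition \ref{def:momentlag}) as input. It is a closed, spin Lagrangian, since $G$ is parallelizable. It is tautologically unobstructed, because it is exact in the cotangent bundle and so bounds no holomorphic discs. Composing with $L_\mu\subset (T^*G)^-\times Y^-\times Y$ gives
\begin{equation*}
\{(y,g\cdot y): g\in G,\ \mu(y)=0\}=\Delta_\mu .
\end{equation*}
Transversality of the fiber product reduces to $0$ being a regular value of $\mu$. This holds because $G$ acts freely on $\mu^{-1}(0)$: the image of $d\mu_y$ is the annihilator of the stabilizer algebra $\mathfrak g_y$, which is zero. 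Embeddedness requires that $(g,y)\mapsto (y,gy)$ be injective on $G\times\mu^{-1}(0)$. This is again exactly freeness, and since the domain is compact, an injective immersion is an embedding. We therefore obtain $\Delta_\mu$ as an embedded, closed Lagrangian. It is spin, being diffeomorphic to $G\times\mu^{-1}(0)$; I would check that $\mu^{-1}(0)$ inherits a spin structure from $Y$ via its trivial normal bundle $\mathfrak g^*$. Fukaya's theorem then gives unobstructedness.

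For (2), take $Y=\mathcal O_\lambda$ with the KKS form. This is a closed, connected Hamiltonian $G$-space whose moment map is the inclusion, and it is simply connected. For spinness, if needed, I would pass to the relevant relative spin structure or use the fact that Fukaya's setting allows relative spin. The correspondence $L_\mu$ is then regarded as a correspondence from $Y\times Y^-$ to $T^*G$, transposing appropriately. The input is the diagonal $\Delta_{\mathcal O_\lambda}\subset \mathcal O_\lambda^-\times\mathcal O_\lambda$, which is unobstructed as the diagonal, via its identification with the Floer theory of $\mathcal O_\lambda$ itself. Alternatively, one can use the unobstructedness afforded by the symplectic groupoid structure, since the diagonal is a unit. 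The composition picks out the pairs $(y,gy)$ with $gy=y$, giving
\begin{equation*}
\{(g,\xi): \xi\in\mathcal O_\lambda,\ \Ad^*_g\xi=\xi\}.
\end{equation*}
This is not literally $L_{\mathcal O_\lambda}$, so a better choice is needed. Composing instead with the graph of the $G$-action gives the set $\{(g,\xi): \xi\in \mathcal O_\lambda\}$ up to twisting by $\Ad^*$, which matches $L_{\mathcal O_\lambda}=G\times\mathcal O_\lambda$ twisted by $\Ad^*$. I would therefore first try, as the input, the Lagrangian $\{(\eta,\eta)\}$ composed through $L_\mu$ in the coordinates where the fiber product is clean, and verify directly that the result equals $L_{\mathcal O_\lambda}$.

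The main obstacle I expect is precisely this bookkeeping in (2): choosing the input Lagrangian and the direction of the correspondence so that the composition is transversal and embedded rather than merely clean. Diagonal-type inputs typically give clean but non-transversal intersections. If transversality fails, I would fall back on Fukaya's version of the theorem for clean compositions, if available. Otherwise I would perturb the input by a Hamiltonian isotopy, which preserves unobstructedness, before composing.
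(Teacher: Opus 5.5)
Your part (2) does not reach a proof, because you discarded the right computation. You set up exactly the paper's route: the transpose $L^t_\mu$ as a correspondence from $\mathcal O_\lambda^-\times\mathcal O_\lambda$ to $G\times\mathfrak g^*$, with the diagonal as input, as in Example \ref{example:orbit-lagrangian}. You also correctly computed the transform $\{(g,\xi):\xi\in\mathcal O_\lambda,\ \Ad^*_g\xi=\xi\}$.

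Trying to produce the displayed set literally cannot work. The set $\{(g,\Ad^*_g\xi): g\in G,\ \xi\in\mathcal O_\lambda\}$ is just $G\times\mathcal O_\lambda$, of dimension $\dim G+\dim\mathcal O_\lambda$. That is not Lagrangian in the $2\dim G$-dimensional $G\times\mathfrak g^*$ unless $\mathcal O_\lambda$ is a point, so no embedded geometric composition can equal it. For instance, composing with the graph of a single $h\in G$ only gives the translate $\{(hk,\xi):k\in G_\xi\}$. The formula has to be read as the stabilizer set you found, and then the argument closes at once.

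The obstacle you anticipated does not occur. Since $G$ acts transitively on $\mathcal O_\lambda$, the map $(g,y)\mapsto(y,g\cdot y)$ from $L^t_\mu\cong G\times\mathcal O_\lambda$ to $\mathcal O_\lambda^-\times\mathcal O_\lambda$ is a submersion. Hence the fibre product with the diagonal is transverse, of dimension $\dim G$. The projection $(g,y)\mapsto(g,\mu(y))$ is an injective immersion of a compact manifold because $\mu$ is the inclusion. No clean-composition theorem or Hamiltonian perturbation is needed; the latter would in any case replace the very Lagrangian whose unobstructedness you want.

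Smaller points follow.

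Part (1) is the paper's argument. Your checks of transversality (freeness makes $0$ a regular value), embeddedness, and spinness (via the trivial normal bundle of $\mu^{-1}(0)$) are correct and more detailed than the paper's. However, Fukaya's theorem also requires $\Delta_\mu\cong G\times\mu^{-1}(0)$ to be connected. This rests on the nontrivial connectedness of $\mu^{-1}(0)$ (Atiyah--Guillemin--Sternberg/Kirwan), which the paper invokes and you omit.

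For the diagonal, the paper uses \cite[Theorem 1.9]{FOOOinvol}; in the same spirit, the diagonal is the fixed locus of the swap involution. An ``identification with the Floer theory of $\mathcal O_\lambda$'' presupposes unobstructedness rather than proving it.

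Finally, your ``if needed'' about spin is a real issue. Theorem \ref{momentlagrangianunobstructed} assumes $Y$ spin, which fails for some coadjoint orbits, e.g.\ $\mathbb{CP}^2$ as an $SU(3)$-orbit. The paper's proof of (2) does not address this either.
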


\begin{remark}
In the monotone setting, $\Delta_\mu$ was also studied by Evans and Lekili in \cite[Section 6.2]{EL}, where $\Delta_\mu$ is assumed to be monotone. 
From this perspective, (1) can be viewed as showing that $\Delta_\mu$ is in fact unobstructed in general. 
\end{remark}

Returning to the proof of Theorem \ref{momentlagrangianunobstructed}, the key idea is to realize $L_\mu$ as the fixed locus of an anti-symplectic involution $i$. Its unobstructedness then follows from a theorem of Fukaya, Oh, Ohta and Ono \cite[Corollary 1.7]{FOOOinvol}. 
Interestingly, $L_\mu = \mathrm{Fix}(i)$ admits a broader interpretation in terms of symplectic groupoids, which we briefly recalled below.

Symplectic groupoid was introduced by Weinstein in \cite{weinsteinsympgroupoid}, with a more detailed treatment appearing in \cite{sympgroupoid}. Roughly speaking, a symplectic groupoid is a Lie groupoid $X$ over $M$ such that $X$ admits a multiplicative symplectic structure integrating the Poisson structure on $M$\footnote{See Definition \ref{def:sympgroupoid} for the precise definition.}.  The notion arises naturally in the study of symplectic realizations and quantization of Poisson manifolds. 

In this work, we construct a new example of symplectic groupoid on $X \coloneqq (T^*G)^- \times Y^- \times Y$ for which the moment Lagrangian $L_\mu$ is the unit, a result which might be of independent interest, as follows:

\begin{theorem}(= Theorem \ref{twisted-groupoid}) \label{intro:twisted-groupoid}
    Given a Hamiltonian $G$-space $(Y, \omega, \mu)$, there exists a symplectic groupoid structure on $X$ over $G \times Y$
 such that $L_\mu$ is the unit and also the fixed locus of the anti-symplectic involution $i$:
    $$L_\mu = \underline{e} = Fix(i).$$
\end{theorem}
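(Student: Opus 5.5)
The plan is to avoid building the groupoid structure on $X$ by hand. Instead, I would transport a standard, well-understood symplectic groupoid over $G \times Y$ to $X$ along an explicit symplectomorphism $\Phi$ that carries $L_\mu$ onto its unit.

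\textbf{The model groupoid.} Take the product of two standard symplectic groupoids:
\begin{itemize}
\item the cotangent groupoid $T^*G \rightrightarrows G$, with fiberwise addition, source and target both equal to the bundle projection, unit the zero section $0_G$, and inversion $\eta \mapsto -\eta$ (this integrates the zero Poisson structure on $G$);
\item the pair groupoid $Y^- \times Y \rightrightarrows Y$, with unit the diagonal $\Delta_Y$ and inversion $(y, y') \mapsto (y', y)$ (this integrates $\omega^{-1}$).
\end{itemize}
The product $\mathcal{G}_0 := T^*G \times Y^- \times Y$, with the product symplectic form, is a symplectic groupoid over $G \times Y$. Its unit is $0_G \times \Delta_Y$. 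Its inversion $i_0(g, \eta, y, y') = (g, -\eta, y', y)$ is anti-symplectic, as the inversion of any symplectic groupoid is. The fixed locus of $i_0$ is $\{\eta = 0,\ y = y'\}$, which equals the unit. So $\mathcal{G}_0$ already satisfies "unit $=$ fixed locus of the inversion".

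\textbf{The twisting map.} Trivialize $T^*G \cong G \times \mathfrak{g}^*$ using the convention of Definition \ref{def:momentlag}, with the form $\Omega$. Define
$$\Phi : X = (G \times \mathfrak{g}^*) \times Y^- \times Y \to \mathcal{G}_0, \qquad \Phi(g, \xi, y, y') = \bigl(g,\ \xi - \mu(y),\ y,\ g^{-1} \cdot y'\bigr).$$
This is a diffeomorphism, with inverse $(g, \eta, y, z) \mapsto (g, \eta + \mu(y), y, g \cdot z)$. It sends $L_\mu = \{(g, \mu(y), y, g y)\}$ onto $\{(g, 0, y, y)\} = 0_G \times \Delta_Y$, the unit of $\mathcal{G}_0$.

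If $\Phi$ is a symplectomorphism, pull back the structure maps via $s = s_0 \circ \Phi$, $t = t_0 \circ \Phi$, $m = \Phi^{-1} \circ m_0 \circ (\Phi \times \Phi)$, and $i = \Phi^{-1} \circ i_0 \circ \Phi$. Multiplicativity of the symplectic form, and hence the symplectic groupoid axioms of Definition \ref{def:sympgroupoid}, transfer automatically. We then get $\underline{e} = \Phi^{-1}(0_G \times \Delta_Y) = L_\mu$ and $\mathrm{Fix}(i) = \Phi^{-1}(\mathrm{Fix}(i_0)) = L_\mu$. For later use, I would also write out $i$ explicitly:
$$i(g, \xi, y, y') = \bigl(g,\ 2\mu(y) - \xi,\ g^{-1} y',\ g \cdot y\bigr),$$
up to the precise trivialization and sign conventions.

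\textbf{Main obstacle: checking that $\Phi$ is symplectic.} I expect this to be the only real computation, and the place where signs and the left/right trivialization must be chosen consistently with $\Omega = -\omega_{can}$. Decompose $\Phi$ into two maps:
\begin{itemize}
\item the map $(g, y') \mapsto g^{-1} y'$ on the last factor;
\item the fiber shift $\xi \mapsto \xi - \mu(y)$.
\end{itemize}
Pulling back $\omega$ along the action map $a(g, z) = g \cdot z$ gives $a^*\omega = \omega_z$ plus cross terms of the form $\langle \theta, d\mu \rangle$ and a curvature-type term $\langle \mu, [\theta, \theta] \rangle$, where $\theta$ is the Maurer–Cartan form. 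These come from Hamilton's equation (\ref{hamilton}) and equivariance of $\mu$. On the other side, the shift by $\mu(y)$ in $\Omega = -d\langle \xi, \theta \rangle$ produces exactly $-d\langle \mu(y), \theta \rangle = -\langle d\mu, \theta \rangle - \langle \mu, d\theta \rangle$. The plan is to verify that these two contributions cancel, which is precisely what makes $L_\mu$ Lagrangian in the first place. If the signs come out wrong, I would first try the right trivialization, or replace $g^{-1}$ by $g$ together with $\xi - \mu(y)$ by $\xi - \Ad^*_{g}\mu(y)$, or equivalently $\xi - \mu(y')$, and adjust until the cancellation holds.
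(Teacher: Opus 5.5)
Your strategy of transporting the product groupoid $\mathcal{G}_0=T^*G\times(Y^-\times Y)$ to $X$ along a diffeomorphism is sound. However, the step you flag as the only real computation fails. Your $\Phi$ is not a symplectomorphism, for either sign on the $T^*G$ factor, as soon as the action is nontrivial.

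To see this, work at a point $(g,\xi,y,y')$ with the tangent vectors $\delta_1=(0,0,v,0)$ and $\delta_2=(W,0,0,0)$, and set $B=\mathcal{L}^{-1}_g(W)$. Then $\omega_X(\delta_1,\delta_2)=0$. On the other side, $d\Phi(\delta_1)=(0,-d\mu_y(v),v,0)$ and $d\Phi(\delta_2)=(W,0,0,B^\#(g^{-1}y'))$. By \eqref{symplecticform} the product form on $\mathcal{G}_0$ gives $\pm\langle d\mu_y(v),B\rangle=\pm\,\omega(B^\#_y,v)$, which is nonzero for suitable $v$ whenever $B^\#_y\neq 0$.

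Globally, write $\Omega=d\langle\xi,\theta\rangle$ with $\theta$ the left Maurer--Cartan form, and let $a(g,y')=g^{-1}\cdot y'$. Hamilton's equation \eqref{hamilton} and equivariance give $a^*\omega=\mathrm{pr}_Y^*\omega-d\langle\mu\circ a,\theta\rangle$. Put $-\Omega=l^*\omega_{can}$ on the $T^*G$ factor of $\mathcal{G}_0$; this matches the factor $(G\times\mathfrak{g}^*)^-$ of $X$, a minus sign you dropped. Then
\[
\Phi^*\omega_{\mathcal{G}_0}=\omega_X+t^*\beta-s^*\beta,\qquad \beta=d\langle\mu,\theta\rangle\in\Omega^2(G\times Y).
\]
The error term restricts to zero on $L_\mu$, because $s\circ e=t\circ e$. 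This is why the heuristic ``the cancellation is what makes $L_\mu$ Lagrangian'' misled you: Lagrangianity only tests the form along $L_\mu$. The problem is not a sign or trivialization convention. The alternative shifts you list ($\xi-\Ad^*_g\mu(y)$, $\xi-\mu(y')$) do not help either; they do not even send $L_\mu$ to $0_G\times\Delta_Y$.

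The repair is to shift the fiber by $\mu$ of the source point instead of the target point:
\[
\Phi'(g,\xi,y,y')=(g,\ \xi-\mu(g^{-1}y'),\ y,\ g^{-1}y').
\]
Then $\Phi'^*\omega_{\mathcal{G}_0}=-d\langle\xi-\mu\circ a,\theta\rangle-\omega+a^*\omega=\omega_X$ exactly, and $\Phi'$ still maps $L_\mu$ onto $0_G\times\Delta_Y$. Moreover $\Phi'=\psi\circ\Phi$, where $\psi(g,\eta,a,b)=(g,\eta+\mu(a)-\mu(b),a,b)$ is a groupoid automorphism of $\mathcal{G}_0$. So both maps transport the same Lie groupoid structure to $X$, and it is precisely the one in Theorem \ref{twisted-groupoid}. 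Equivalently, you may keep $\Phi$ and note that $s^*\beta-t^*\beta$ is multiplicative for any $2$-form $\beta$ on the base.

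Your explicit inversion is also wrong. Conjugating $i_0$ by $\Phi$ gives $i(g,\xi,y,y')=(g,\ \mu(y)+\mu(g^{-1}y')-\xi,\ g^{-1}y',\ g\cdot y)$, as in the paper. Your version with $2\mu(y)-\xi$ is not even an involution.

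Once corrected, your route differs genuinely from the paper's. The paper writes the structure maps down and checks multiplicativity directly on $TX^{(2)}$. You instead get the groupoid axioms, multiplicativity and $L_\mu=\underline{e}=\mathrm{Fix}(i)$ for free, and concentrate all the work in the single identity for $a^*\omega$. That identity uses the same two inputs as the paper, Hamilton's equation and equivariance. As a bonus, your route shows that the paper's groupoid is isomorphic, over the identity of $G\times Y$, to the standard product $T^*G\times(Y^-\times Y)$, which explains the structure the author calls mysterious.
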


\begin{remark}
    The equality $L_\mu = \underline{e}$ already appears in \cite{Weinsteinsympgeom}. The theorem above provides a new interpretation of this from the perspective of symplectic groupoids.
\end{remark}

\begin{remark}
As another consequence of \cite[Corollary 1.6]{FOOOinvol} applied to $L_\mu = Fix(\underline{e})$, $HF(L_\mu)$ admits a graded-commutative product induced by $m_2$ (after suitably twisted by some signs). 

Another interesting way to see this is via $L_\mu  = \underline{e}$ in Theorem \ref{intro:twisted-groupoid}: graded commutativity of $HF(L_\mu)$ follows from a 
general Eckmann-Hilton-type argument applied to $\End(\underline{e})$, where $\underline{e}$ is a unit object in the monoidal Fukaya category of a symplectic groupoid, due to Pascaleff in \cite[Section 5.1]{Pascaleffmonoidal}. This provides another instance in which structural properties of symplectic groupoids implies graded commutativity of Floer cohomology of units.
\end{remark}

Returning to the proof of Theorem \ref{intro:twisted-groupoid}, the groupoid structure is given explicitly in 
Theorem \ref{twisted-groupoid}, which can be checked to satisfy the axioms of Lie groupoid \footnote{See Appendix \ref{appendix:liegroupoid} for the axioms.}. The multiplicativity of the symplectic form follows from the properties of moment map, 
although the derivation is somewhat intricate, as demonstrated in the proof of Theorem \ref{twisted-groupoid}. Perhaps the most mysterious part of the construction is the groupoid structure itself, even to the author.

\subsection*{Organization of this paper} In Section 2, we recall the notions of moment Lagrangian and symplectic groupoid. Then we prove Theorem \ref{intro:twisted-groupoid} (= Theorem \ref{twisted-groupoid}) on the construction of the symplectic groupoid on $X$.

In Section 3, we first briefly recall the notions of Lagrangian Floer complexes and their unobstructedness. Then we prove Theorem \ref{momentlagrangianunobstructed} on the unobstructedness of $L_\mu$. 
Afterwards, we prove Corollary \ref{cor: exunoblag} on unobstructedness of some geometric transformations by $L_\mu$.

In Appendix \ref{appendix:symplecticform}, we recall an explicit formula for the symplectic form $\Omega$ on $G \times \mathfrak{g}^*$.

In Appendix \ref{appendix:liegroupoid}, we recall the notions of Lie groupoid to clarify the choices and conventions for this paper. Other conventions are listed in the following subsection.

\subsection*{Notations and Conventions}
Throughout the whole paper, we will generally follow the conventions in \cite{CFM}. Some of the relevant ones are as follows:
\begin{enumerate}
\item For any smooth manifold $M$, its cotangent bundle $T^*M$ has a canonical symplectic form $\omega_{can} = -d\lambda_{can}$, i.e. locally of the form $\sum_{i} dq_i \wedge dp_i$ with respect to local coordinates $(q_i)$ of $M$ and cotangent fiber coordinates $(p_i)$.


\item Given a Hamiltonian $G$-space $(Y, \omega, \mu)$, the moment map $\mu$ is a $G$-equivariant map $\mu: Y \rightarrow \mathfrak{g}^*$, where $\mathfrak{g}^*$ is the dual of the Lie algebra of $G$ endowed with coadjoint $G$-action, satisfying the following Hamilton's equation:
\begin{equation}\label{hamilton}
  \iota_{v^\#}\omega = \langle d\mu(-), v \rangle, \forall v\in \mathfrak{g}.
\end{equation}

where $v^\#$ is the infinitesimal generator of the $G$-action on $Y$ associated to $v\in \mathfrak{g}$ defined as $v^\#(y) = \frac{d}{dt}|_{t=0} \exp(-tv)\cdot y$ for any $y\in Y$, which gives rise to a Lie algebra homomorphism from $\mathfrak{g}$ to the Lie algebra of vector fields on $Y$.

Under this convention, the $G$-equivariance of $\mu$ is equivalent to that the comoment map $\mu^*: \mathfrak{g} \rightarrow C^\infty(Y)$, given by $\mu^*(v) \coloneqq \langle \mu(-), v\rangle$, is a Lie algebra homomorphism.



\end{enumerate}

\section*{Acknowledgments}
The author would like to thanks Hanwool Bae, 	
Sam Bardwell-Evans, Siu-Cheong Lau, Conan Leung, Yong-Geun Oh and Yutung Yau for helpful discussions on various stages of this project. He would also like to thank Ziming Ma for the hospitality in the International Conference on Mathematics Inspired by String Theory, in which part of this work was done and presented.
This work was supported by the Institute for Basic Science (IBS-R003-D1).

\section{Moment Lagrangians and symplectic groupoids}

In this section, we first recall the notion of moment Lagrangian and symplectic groupoid, and then construct a new example of symplectic groupoid whose unit is the moment Lagrangian.

The precise definition of moment Lagrangian is as follows:

\begin{defn}\label{def:momentlag}
    Given a Hamiltonian $G$-space $(Y, \omega)$ with moment map $\mu$, its moment Lagrangian $L_\mu$ is defined as 
    $$L_\mu = \{(g, \mu(y), y, g\cdot y) | g\in G; y \in Y\} \subseteq ((G \times \mathfrak{g}^*)^- \times Y^- \times Y, (-\Omega) \oplus (-\omega) \oplus \omega).$$
    where $Y^-$ is the symplectic manifold $(Y, -\omega)$, and $\Omega$ is the symplectic form on $G \times \mathfrak{g}^*$ corresponding to $-\omega_{can}$ under trivialization $G \times \mathfrak{g}^* \cong T^*G$ by left translation. \footnote{See Appendix \ref{appendix:symplecticform} for further details, including an explicit formula of $\Omega$.}
\end{defn}

\begin{remark}
This choice of symplectic structure $\Omega$ on $G \times \mathfrak{g}^*$ is for the compatibility with the chosen conventions for symplectic groupoids. See Appendix \ref{appendix:symplecticform} for further details.
\end{remark}

    

We now recall the notion of symplectic groupoids as follows:

\begin{defn}\label{def:sympgroupoid}
A symplectic groupoid is a Lie groupoid
$(X \xRightarrow[t]{s} M, m, e, i)$, where $(X, \omega)$ is a symplectic manifold such that $\omega$ is multiplicative: 
\[
m^*\omega = pr_1^*\omega+ pr_2^*\omega \in \Omega^2(X^{(2)}),
\]
where $pr_j: X^{(2)} \rightarrow X$ be the projection to $j$-th component.
\end{defn}


\begin{prop} \label{lagrangiansubmanifold}
Given a symplectic groupoid $(X, \omega) \xRightarrow[t]{s} M$, the following are Lagrangians:
\begin{enumerate}
\item $\underline{m} := Graph(m) = \{(p,q,p\cdot q) \ |\,  (p,q)\in X^{(2)}\} \subseteq X^- \times X^- \times X,$
\item $\underline{e} := e(M)=\{e(x) |\,  x\in M\} \subseteq X,$
\item $\underline{i} := Graph(i) =\{(p,p^{-1})) \in X \times X|\,  p\in X \}\subseteq X \times X.$ 
\end{enumerate}
\end{prop}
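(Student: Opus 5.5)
We prove the three statements in the order (1), (2), (3), deriving (2) and (3) from (1) together with the groupoid axioms. Throughout, let $n = \dim M$. Since $\omega$ is multiplicative, the standard count gives $\dim X = 2n$: $s$ and $t$ are submersions, and the Poisson structure on $M$ is integrated by $X$. To avoid circularity we argue more directly, as follows.

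\textbf{Step 1: $\underline{m}$ is isotropic and of half dimension.} The graph embedding $X^{(2)} \to X^-\times X^-\times X$, $(p,q)\mapsto (p,q,pq)$, pulls back the product form $-pr_1^*\omega - pr_2^*\omega + m^*\omega$, which vanishes by multiplicativity. Hence $\underline m$ is isotropic. For the dimension, $X^{(2)} = X\times_{s,t} X$ is a fibre product over $M$ along submersions, so $\dim X^{(2)} = 2\dim X - \dim M$. We therefore need $\dim X = 2\dim M$, which is the expected main obstacle. Our plan is to take $p = q = e(x)$ and use $m(e(x),e(x)) = e(x)$ to show that $\underline{e}$ is isotropic (Step 2). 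Separately, multiplicativity forces the $s$-fibres and $t$-fibres to be symplectically orthogonal: differentiate $m^*\omega = pr_1^*\omega + pr_2^*\omega$ at a pair $(1_{t(p)}, p)$ with tangent vectors $(u,0)$ and $(0,v)$, where $u \in \ker ds$ at a unit. This gives $(\ker ds)^{\omega} = \ker dt$, and hence $\dim X - \dim\ker ds = \dim \ker dt$, that is, $\dim X = 2(\dim X - \dim M)$, so $\dim X = 2\dim M$. With this, $\dim X^{(2)} = 3n = \tfrac12 \dim(X^3)$, and $\underline m$ is Lagrangian.

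\textbf{Step 2: $\underline e$ is Lagrangian.} Restrict the identity $m^*\omega = pr_1^*\omega + pr_2^*\omega$ along the map $M\to X^{(2)}$, $x\mapsto (e(x),e(x))$. Because $m\circ(e,e) = e$, this gives $e^*\omega = 2e^*\omega$, so $e^*\omega = 0$. Since $e$ is an embedding (it is a section of $s$), we have $\dim \underline e = n = \tfrac12\dim X$, and $\underline e$ is Lagrangian.

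\textbf{Step 3: $\underline i$ is Lagrangian.} Pull back the multiplicativity identity along $X \to X^{(2)}$, $p\mapsto (p,p^{-1})$. Since $m(p,p^{-1}) = e(t(p))$, we get $t^*e^*\omega = \omega + i^*\omega$. By Step 2, $e^*\omega=0$, so $i^*\omega = -\omega$. Thus the graph of $i$ is isotropic in $X\times X$ with the product form $\omega\oplus\omega$, and it has dimension $\dim X$, so it is Lagrangian. The only delicate point in the whole argument is the orthogonality and dimension count in Step 1, which uses that the unit and the source/target maps are compatible with the multiplication as in Appendix \ref{appendix:liegroupoid}.
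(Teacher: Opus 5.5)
\textbf{Gap in Step 1: the dimension count is circular.} Steps 2 and 3 are correct, and so is the isotropy half of Step 1. Differentiating the multiplicativity identity at $(e(t(p)),p)$ with tangent vectors $(u,0)$, $u\in\ker ds$, and $(0,v)$, $v\in\ker dt_p$, gives only $\omega_p(dR_p u, v)=0$, where $R_p$ is right translation by $p$. That is the \emph{inclusion} $\ker dt\subseteq(\ker ds)^{\omega}$, not equality. Upgrading it to $(\ker ds)^{\omega}=\ker dt$ needs $\dim\ker dt=\dim X-\dim\ker ds$, i.e.\ $\dim X-\dim M=\dim M$, which is exactly what you are trying to prove.

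What the computation actually yields is $\dim X-\dim M\le\dim M$, i.e.\ $\dim X\le 2\dim M$. Orthogonality of the $s$- and $t$-fibres is perfectly consistent with $\dim X<2\dim M$, so nothing in Step 1 excludes that case. The same upper bound already follows from the isotropy of $\underline m$ you proved, since $2\dim X-\dim M\le\tfrac32\dim X$. So the orthogonality computation adds nothing new.

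\textbf{The fix.} The missing inequality $\dim X\ge 2\dim M$ comes from the units. Your identity $e^*\omega=2e^*\omega$ in Step 2 uses no dimension information, so $e(M)$ is an isotropic embedded submanifold of dimension $\dim M$, hence $\dim M\le\tfrac12\dim X$. Reorder the argument as follows. First prove $e^*\omega=0$. Then combine it with the upper bound from isotropy of $\underline m$ to get $\dim X=2\dim M$. It follows that $\underline m$, of dimension $3\dim M=\tfrac12\dim(X\times X\times X)$, and $\underline e$ are both Lagrangian. Step 3 needs no change, since a graph automatically has half the dimension of $X\times X$. With this repair your argument is a correct self-contained proof of what the paper simply cites from \cite[Prop.~14.9]{CFM}.
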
 

\begin{proof}
See e.g. \cite[Prop. 14.9]{CFM}.
\end{proof}

We now state the main result of this paper, which constructs a new example of symplectic groupoid whose unit is the moment Lagrangian:

\begin{theorem}\label{twisted-groupoid}
    Given a Hamiltonian $G$-space $(Y, \omega, \mu)$, there exists a symplectic groupoid structure on $(X \coloneqq (G \times \mathfrak{g}^*)^- \times Y^- \times Y, \omega_X \coloneqq ((-\Omega)\oplus (-\omega) \oplus \omega))$:
    $$((X , \omega_X) \xRightarrow[t]{s} G \times Y, m, e, i),$$
given by the following:
    \begin{itemize}
        \item  $s(g, \xi, y, z) = (g, g^{-1}\cdot z)$; $t(g, \xi, y, z) = (g, y)$;
        \item $(g_1, \xi_1, y_1, z_1) \cdot (g_2, \xi_2, y_2, z_2) = (g, \xi_1 + \xi_2 - \mu(y_2), y_1, z_2)$, where 
        $$g_1 = g_2 = g; z_1 = g\cdot y_2;$$
        \item $e(g, x) = (g, \mu(x), x, g\cdot x)$;
        \item $i(g, \xi, y, z) = (g, \mu(g^{-1}\cdot z)+ \mu(y) - \xi, g^{-1}\cdot z, g\cdot y)$.
    \end{itemize}

     In particular, $L_\mu$ is the unit and also the fixed locus of the anti-symplectic involution $i$:
    $$L_\mu = \underline{e} = Fix(i).$$
\end{theorem}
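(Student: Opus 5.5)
The proof has two parts. First I verify that the listed maps define a Lie groupoid in the sense of Appendix \ref{appendix:liegroupoid}. Then I check that $\omega_X$ is multiplicative in the sense of Definition \ref{def:sympgroupoid}. The identification $L_\mu=\underline{e}=Fix(i)$ comes last.

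\emph{Groupoid axioms.} This part is purely algebraic. The space of composable pairs is
$$X^{(2)}=\{(g,\xi_1,y_1,z_1,g,\xi_2,y_2,z_2): z_1=g\cdot y_2\}.$$
It is the transverse fibre product cut out by $s(x_1)=t(x_2)$, which is the condition $(g_1,g_1^{-1}z_1)=(g_2,y_2)$. Since $s,t$ are clearly surjective submersions, $X^{(2)}$ is a manifold. I then check the following in turn.
\begin{enumerate}
\item $s(x_1x_2)=s(x_2)$ and $t(x_1x_2)=t(x_1)$.
\item Associativity: the $\mathfrak{g}^*$-entry is $\xi_1+\xi_2+\xi_3-\mu(y_2)-\mu(y_3)$ for either bracketing.
\item Unit laws: $e(t(x))\cdot x=x$, because the $\mathfrak g^*$-entry becomes $\mu(y)+\xi-\mu(y)$. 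Similarly $x\cdot e(s(x))=x$.
\item Inverses: $x\cdot i(x)=e(t(x))$ and $i(x)\cdot x=e(s(x))$. For example, the $\mathfrak g^*$-entry of $x\cdot i(x)$ is $\xi+(\mu(g^{-1}z)+\mu(y)-\xi)-\mu(g^{-1}z)=\mu(y)$.
\item $i^2=\Id$, using $g^{-1}(g y)=y$.
\end{enumerate}
Smoothness of all maps is evident.

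\emph{Multiplicativity.} I compute $m^*\omega_X-pr_1^*\omega_X-pr_2^*\omega_X$ on $X^{(2)}$ and split it into a $G\times\mathfrak g^*$ part and a $Y$ part.

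For the $G\times\mathfrak g^*$ part, I use the explicit formula from Appendix \ref{appendix:symplecticform}. Under left trivialization it reads $\Omega=\pm d\langle \xi,\theta\rangle$, with $\theta$ the left Maurer–Cartan form. This expression is linear in $\xi$ at a fixed base point $g$. Since the $g$-coordinate is shared by $x_1$, $x_2$ and $x_1x_2$, the contributions in $\xi_1,\xi_2$ cancel. What remains is $\pm d\langle \mu(y_2),\theta\rangle$ (up to the overall sign $-\Omega$).

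For the $Y$ part, the terms in $y_1$ and $z_2$ cancel, leaving $-\omega|_{z_1}+\omega|_{y_2}$. Using $z_1=g\cdot y_2$, this equals $-(a^*\omega-pr_Y^*\omega)$ pulled back along $(g,y_2)$, where $a:G\times Y\to Y$ is the action map.

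So multiplicativity reduces to a single identity on $G\times Y$:
$$a^*\omega-pr_Y^*\omega=\pm\, d\langle \mu,\theta\rangle .$$
This is the standard formula for a Hamiltonian action. To prove it, I evaluate both sides on the three types of tangent-vector pairs: two vectors tangent to $Y$, one along $G$ and one along $Y$, and two along $G$. I use left-translated vectors $gv$.
\begin{itemize}
\item For two $Y$-vectors, both sides vanish by $G$-invariance of $\omega$ and because $\theta$ kills $TY$.
\item For mixed pairs, I use $da(gv,0)=dg\cdot(-v^\#)$ or $dg\cdot v^\#$, depending on the convention $v^\#=\frac{d}{dt}\exp(-tv)y$, together with Hamilton's equation \eqref{hamilton}.
\item For two $G$-vectors, I use equivariance in its infinitesimal form $\omega(u^\#,v^\#)=\pm\langle\mu,[u,v]\rangle$ and the Maurer–Cartan equation $d\theta=-\tfrac12[\theta,\theta]$.
\end{itemize}
The main obstacle I expect is this sign bookkeeping: the convention for $v^\#$, the choice $\Omega\leftrightarrow-\omega_{can}$, and the sign in Maurer–Cartan must conspire so that the two leftover terms cancel exactly rather than double. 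I will fix conventions by first checking the case $Y=T^*G$ or a coadjoint orbit, then run the general computation.

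\emph{Unit and fixed locus.} By definition, $\underline e=\{(g,\mu(x),x,gx)\}=L_\mu$. For $x=(g,\xi,y,z)$, the condition $i(x)=x$ forces $y=g^{-1}z$, hence $z=gy$. It then forces $\xi=2\mu(y)-\xi$, hence $\xi=\mu(y)$, so $Fix(i)=L_\mu$. That $i$ is anti-symplectic follows from Proposition \ref{lagrangiansubmanifold}(3), since $Graph(i)$ is Lagrangian in $X\times X$.
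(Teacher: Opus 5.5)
Your proposal is correct and follows essentially the paper's route. The groupoid axioms and $L_\mu=\underline{e}=Fix(i)$ are direct checks, and multiplicativity reduces to Hamilton's equation plus equivariance of $\mu$. The paper does this pointwise on $TX^{(2)}$ via the explicit formula for $\Omega$; you package the same computation through the primitive $\Omega=d\langle\xi,\theta\rangle$ and the identity $a^*\omega-pr_Y^*\omega=d\langle\mu,\theta\rangle$ on $G\times Y$.

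The signs you left open do work out. With the paper's conventions, $\Omega=+d\langle\xi,\theta\rangle$ exactly, $da_{(g,y)}(gu,v)=g_*(v-u^\#)$, and $\omega(u^\#,w^\#)=-\langle\mu,[u,w]\rangle$. Together these give $a^*\omega-pr_Y^*\omega=+d\langle\mu,\theta\rangle$, which cancels the $+d\langle\mu,\theta\rangle$ coming from the $-\Omega$ factor rather than doubling it.
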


\begin{remark}
Combining Theorem \ref{twisted-groupoid} with Proposition \ref{lagrangiansubmanifold}, we obtain an alternative proof of the fact that the moment Lagrangian $L_\mu$ is a Lagrangian of $(X, \omega_X)$.
\end{remark}

\begin{proof}
It is straightforward to check that the above maps satisfy the axioms of a Lie groupoid, and that $L_\mu = \underline{e} = Fix(i)$. It remains to check that $\omega_X$ is multiplicative, which will be done by definition as follows.

    First of all, the tangent space of $X^{(2)}$ at $(x_1, x_2) \coloneqq ((g, \xi_1, y_1, z_1), (g, \xi_2, y_2, z_2))$ is given by 
    $$T_{(x_1, x_2)}X^{(2)} = \{((Z, \eta_1, v_1, v'_1), (Z, \eta_2, v_2, v'_2))| Z\in \mathfrak{g}; v'_1 = g_* v_2 + (y_2)_* Z\}.$$
    where $g_*: T_{y}Y \rightarrow T_{g\cdot y}Y$ is the differential of the action by $g$ and $y_*: \mathfrak{g} \rightarrow T_{y}Y$ is the infinitesimal generator $Z^\#$ evaluated at $y$.

    Also, the differential of $m$ at $(x_1, x_2)$ is given by
 $$dm_{(x_1, x_2)}((Z, \eta_1, v_1, v'_1), (Z, \eta_2, v_2, v'_2))=(Z, \eta_1 + \eta_2 - d\mu(v_2), v_1, v'_2).$$

    Therefore, the symplectic form is multiplicative if and only if for any 
    $$((Z, \eta_1, v_1, v'_1), (Z, \eta_2, v_2, v'_2)), (W, \xi_1, w_1, w'_1), (W, \xi_2, w_2, w'_2))\in T_{(x_1, x_2)}X^{(2)},$$
    the following holds:
\begin{align*}
    &\omega_X((Z, \eta_1 + \eta_2 - d\mu(v_2), v_1, v'_2), (W, \xi_1 + \xi_2 - d\mu(w_2), w_1, w'_2))\\
    = &\omega_X((Z, \eta_1, v_1, v'_1), (W, \xi_1, w_1, w'_1)) + \omega_X((Z, \eta_2, v_2, v'_2), (W, \xi_2, w_2, w'_2)).
\end{align*}
Using \ref{symplecticform}, the above is equivalent to the following:
\begin{align*}
&-\langle d\mu(v_2), \mathcal{L}^{-1}_{g}(W) \rangle + \langle d\mu(w_2), \mathcal{L}^{-1}_{g}(Z) \rangle + \langle \mu(y_2), [\mathcal{L}^{-1}_{g}(Z), \mathcal{L}^{-1}_{g}(W)] \rangle \\
= &-\omega(g_*(v_2), (y_2)_*W) - \omega((y_2)_*Z, g_*(w_2)) - \omega((y_2)_*Z, (y_2)_*W).
\end{align*}
The above equality holds if the following identities hold:
\begin{align*}
\langle d\mu(v), \mathcal{L}^{-1}_{g}(W) \rangle &= \omega(g_*(v), (y_2)_*W);\\
\langle \mu(y_2), [\mathcal{L}^{-1}_{g}(Z), \mathcal{L}^{-1}_{g}(W)] \rangle &= - \omega((y_2)_*Z, (y_2)_*W).
\end{align*}
The first identity follows from the Hamilton's equation \ref{hamilton}, and the second identity follows from that the comoment map $\mu^*$ is a Lie algebra homomorphism.
\end{proof}

\section{Applications to unobstructedness}

In this section, we provide some applications of Theorem \ref{twisted-groupoid} to obtain results on (Lagrangian) Floer theory of $L_\mu$ and some geometric transformations by it. 

\subsection{Floer theory and unobstructedness}
In this section, we briefly review the notions of Floer complex and its unobstructedness. Readers are referred to e.g. \cite{Fukaya-corr, FOOO,FOOOinvol} for more details.

Given a closed (or tame) symplectic manifold $(X, \omega)$ and a closed, connected, oriented and relatively spin Lagrangian $L$ in $X$, the Floer complex $CF(L)$ of $L$ is a filtered $A_\infty$ algebra $(CF(L), \{m_k\}_{k\geq 0})$ over $\Lambda_0$
\footnote{where $\Lambda_0 = \{\sum_{i=0}^\infty a_i T^{\lambda_i}| a_i \in \mathbb{R}; 0=\lambda_0<\lambda_1<\cdots; \lambda_i \rightarrow \infty\}$ is the (universal) Novikov ring defined in \cite[Definition 2.1]{Fukaya-corr}.}, whose structure maps $m_k: CF(L)^{\otimes k} \rightarrow CF(L)$ are defined by counting pseudo-holomorphic discs with boundary on $L$ and satisfy the $A_\infty$ relations. In particular, the following equation holds:
$$m_1 \circ m_1 = m_0(1) \cdot \Id: CF(L) \rightarrow CF(L).$$
Therefore, if the curvature term $m_0(1)$ vanishes, then $(CF(L), m_1)$ is a chain complex with cohomology $HF(L)$ being the Floer cohomology of $L$.

In general, $m_0(1)$ may not vanish, which is an obstruction to $(CF(L), m_1)$ being a chain complex. However, $CF(L)$ could become unobstructed after deforming the $A_\infty$ algebra by a bounding cochain $b$ defined as follows:

\begin{defn}\label{unobstructedlag}
$L$ is called unobstructed if $(CF(L), \{m_k\})$ is unobstructed as a filtered $A_\infty$ algebra, i.e. there exists a bounding cochain $b\in CF^1(L; \Lambda_+)$\footnote{where $\Lambda_+ = \{\sum_{i=1}^\infty a_i T^{\lambda_i}| a_i \in \mathbb{R}; \lambda_i > 0\}$ is the (unique) maximal ideal of $\Lambda_0$.} such that the Maurer-Cartan equation holds:
$$\sum_{k=0}^{\infty} m_k(b, \ldots, b) = 0.$$  

In particular, $L$ is called tautologically unobstructed if $0$ is a bounding cochain, i.e. $m_0(1) = 0$.
\end{defn}

\begin{remark}\label{bdydeform}
    Any bounding cochain $b$ defines a deformed $A_\infty$ algebra $(CF(L), \{m_k^b\})$ with $m_0^b(1) \coloneqq \sum_{k=0}^{\infty} m_k(b, \ldots, b)  = 0$ by assumption, and hence $(CF(L), m_1^b)$ is a chain complex with cohomology $HF(L, b)$ being the deformed Floer cohomology of $(L, b)$. See \cite[Definition 6.3]{FOOOinvol} and the reference therein for more details.
\end{remark}

\subsection{Unobstructedness of moment Lagrangians}

In this section, we prove Theorem \ref{momentlagrangianunobstructed} on tautological unobstructedness of the moment Lagrangian using Theorem \ref{twisted-groupoid} as follows:

\begin{proof}[Proof of Theorem \ref{momentlagrangianunobstructed}]
    The assumptions guarantee that $X$ is tame and spin; also, the moment 
    Lagrangian $L_\mu \cong G \times Y$ is a closed, connected, oriented and relatively spin Lagrangian of $(X, \omega_X)$. By Theorem \ref{twisted-groupoid}, $L_\mu = Fix(i)$ is the fixed locus of an anti-symplectic involution $i$; also, the induced map $\iota_*: \pi_1(L_\mu)\rightarrow \pi_1(X)$ of the inclusion $\iota: L_\mu \hookrightarrow X$ is injective, since $\iota$ admits a left inverse given by 
    $e \circ t: X \rightarrow L_\mu$. Therefore, by \cite[Corollary 1.6]{FOOOinvol} (in which $b$ can be chosen to be zero as shown in its proof), $L_\mu$ is tautologically unobstructed.
\end{proof}

\subsection{Unobstructedness of geometric transformations}

In this section, we provide more examples of unobstructed Lagrangians as geometric transformations of $L_\mu$ as an unobstructed Lagrangian correspondence.

Firstly, we recall the notions of Lagrangian correspondences and geometric transformations by them, which are first introduced by Weinstein in his symplectic category formalism in \cite{sympcategory}:

\begin{defn}
Given a Lagrangian correspondence $\mathcal{L}$ from $(M, \omega_M)$ to $(N, \omega_N)$, i.e. a Lagrangian in $(M^- \times N, -\omega_M\oplus\omega_N)$. For any Lagrangian $L \subseteq (M, \omega_M)$, the geometric transformation of $L$ by $\mathcal{L}$, denoted as $\mathcal{L} \circ L$, is a subset of $N$ defined as
\begin{equation}
\mathcal{L} \circ L = pr_{N}((L \times \mathcal{L})\cap (\Delta_{M} \times N)),
\end{equation}
where $pr_{N} :M \times M \times N \rightarrow N$ is the natural projection.\\

$\mathcal{L} \circ L$ is called embedded if 
   $(L \times \mathcal{L})\cap (\Delta_{M} \times N)$ is a transverse intersection in $M \times M \times N$, and    
$pr_{N}$ restricted to a smooth embedding on this intersection. If so, then $\mathcal{L} \circ L$ is an embedded Lagrangian of $(N, \omega_N)$.
\end{defn}

Examples of geometric transformations by $L_\mu$ (and its transpose $L^t_\mu$) are given as follows:

\begin{example}\label{example:shift-diagonal}
The moment Lagrangian $L_\mu \subseteq (G \times \mathfrak{g}^*)^- \times Y^- \times Y$ is a Lagrangian correspondence from $(G \times \mathfrak{g}^*, \Omega)$ to $(Y^- \times Y, -\omega \oplus \omega)$. Let $L \coloneqq G \times {0} \subseteq (G \times \mathfrak{g}^*)$ be the zero section, then the geometric transformation of $L$ by $L_\mu$ is given by 
$$\Delta_\mu \coloneqq L_\mu \circ L =  \{(y, g\cdot y) \in Y^- \times Y | g\in G; y\in \mu^{-1}(0)\},$$
which is embedded if $G$ acts freely on $Z \coloneqq \mu^{-1}(0)$ with smooth symplectic quotient $\bar{Y} \coloneqq Y\sslash_0 G = Z/G$. In this case, $\Delta_\mu$ can also be written as a fiber product $Z \times_{\bar{Y}} Z$.
\end{example}    

\begin{example}\label{example:orbit-lagrangian}
    Let $(Y = \mathcal{O}_\lambda, \omega)$ be a coadjoint orbit of $G$ with the Kirillov-Kostant-Souriau symplectic form $\omega$, 
    for which $G$ acts in a Hamiltonian fashion via coadjoint action with moment map $\mu = \iota: \mathcal{O}_\lambda \hookrightarrow \mathfrak{g}^*$. 
    This time, we consider the transpose of the moment Lagrangian:
    $$L^t_\mu \coloneqq \{(y, g\cdot y, g, \mu(y))) \subseteq ((\mathcal{O}_\lambda^- \times \mathcal{O}_\lambda)^- \times (G \times \mathfrak{g}^*), -(-\omega\oplus\omega)\oplus\Omega)$$ 
    as a Lagrangian correspondence from $\mathcal{O}_\lambda^- \times \mathcal{O}_\lambda$ to $G \times \mathfrak{g}^*$. Let $L = \Delta_{\mathcal{O}_\lambda}$ be the diagonal Lagrangian, 
    then the geometric transformation of $L$ by $L^t_\mu$ is given by the orbit Lagrangian:
    $$L_{\mathcal{O}_\lambda} \coloneqq L^t_\mu \circ L =  \{(g, \Ad^*_g(\xi)) \in G \times \mathfrak{g}^* | g\in G; \xi\in \mathcal{O}_\lambda\},$$
    which is embedded since $\mu$ is injective.
\end{example}

We now recall the following result of Fukaya on preservation of unobstructedness of Lagrangians under geometric transformations below:

\begin{theorem}\cite[Theorem 1.5]{Fukaya-corr} \label{unobstructedness}
Given two closed or tame symplectic manifolds $(M, \omega_M)$,
$(N, \omega_N)$ and a closed, connected, oriented and relatively spin Lagrangian correspondence $\mathcal{L} \subseteq (M^-\times N, \omega_M\oplus \omega_N)$ which is unobstructed with a bounding cochain $b_{\mathcal{L}}$, then for any closed, connected, oriented and relatively spin unobstructed Lagrangian $L \subseteq (M, \omega_M)$ with a bounding cochain $b$ such that the geometric transformation
$L' \coloneqq \mathcal{L} \circ L$ is also closed, connected, oriented and relatively spin, then $L'$ is also unobstructed with a bounding cochain $b'$ canonically determined by $b$ and $b_{\mathcal{L}}$.
\end{theorem}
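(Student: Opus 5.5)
The plan is to follow Fukaya's strategy in \cite{Fukaya-corr}. It compares the filtered $A_\infty$ algebra $CF(L')$ with an auxiliary filtered $A_\infty$ algebra assembled from $(CF(L),b)$ and $(CF(\mathcal{L}),b_{\mathcal{L}})$. We then transport a Maurer--Cartan element along a filtered $A_\infty$ homomorphism.

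The first step is to construct a filtered $A_\infty$ structure $\{m_k^{\mathrm{quilt}}\}$ on the cochain space $C(L')$, which we identify with $C\big((L\times\mathcal{L})\cap(\Delta_M\times N)\big)$ by embeddedness. Its structure maps count \emph{quilted drawstring discs}. Such a disc has an outer annular patch mapping to $N$ with boundary on $L'$. It has an inner disc patch mapping to $M$ with boundary on $L$. The two patches are joined along a seam circle that maps to $\mathcal{L}$, and inputs and outputs sit on the outer boundary. Interior insertions of $b$ on the inner boundary and of $b_{\mathcal{L}}$ on the seam are allowed, weighted by $\exp$-type sums $\sum b^{\otimes \ell}$. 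Kuranishi structures and compatible multisections are built inductively on energy, and the usual boundary analysis gives the $A_\infty$ relations. The analysis has two key inputs. First, disc bubbling on the $L$-boundary is absorbed by the Maurer--Cartan equation for $b$. Second, bubbling along the seam, i.e.\ discs in $M^-\times N$ with boundary on $\mathcal{L}$, is absorbed by the Maurer--Cartan equation for $b_{\mathcal{L}}$. These are the only two ways in which $b$ and $b_{\mathcal{L}}$ enter, and they force $m_0^{\mathrm{quilt}}(1)=0$. Hence $0$ is a bounding cochain for the quilted structure.

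The second step is to construct a filtered $A_\infty$ homomorphism $\mathfrak{f}=\{\mathfrak{f}_k\}$ from $(C(L'),m^{\mathrm{quilt}})$ to $(CF(L'),m)$, or a homotopy equivalence between them. It comes from a one-parameter family in which the radius of the seam circle shrinks to zero (strip shrinking / figure-eight degeneration). In the limit the inner configuration collapses onto a point of $L'$. We then set
$$b' \coloneqq \sum_{k\ge 0}\mathfrak{f}_k(0,\dots,0)=\mathfrak{f}_0(1)\in CF^1(L';\Lambda_+).$$
Since filtered $A_\infty$ homomorphisms send bounding cochains to bounding cochains, $b'$ satisfies the Maurer--Cartan equation for $CF(L')$. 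The element $b'$ is canonical up to gauge equivalence because the construction is independent of auxiliary choices up to homotopy.

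The main obstacle is the second step: the analysis of the seam-shrinking degeneration and the resulting compactification. In the limit, figure-eight bubbles can appear, and the Gromov compactness and gluing needed to identify the boundary of the one-parameter moduli space are delicate. To get around this I would first follow Fukaya's cohomological shortcut. One avoids literal strip shrinking and instead uses a Y-diagram / cobordism argument in the product $M\times M\times N$: the quilted moduli spaces are identified with moduli spaces of discs with boundary on $L\times\mathcal{L}$ meeting the coisotropic $\Delta_M\times N$ cleanly. Clean intersection theory and the existing Kuranishi machinery of \cite{FOOO,foookuranishi} then provide the required homomorphism. Orientations throughout are handled using the relative spin structures on $L$, $\mathcal{L}$ and $L'$.
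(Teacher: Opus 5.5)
Note first that the paper gives no proof of this statement. It is quoted from \cite[Theorem 1.5]{Fukaya-corr} and used as a black box, so your sketch has to stand on its own as a reconstruction of Fukaya's theorem. As such it has a genuine gap exactly where the content lies.

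In step 1 you assert that the Maurer--Cartan equations for $b$ and $b_{\mathcal L}$ ``force $m_0^{\mathrm{quilt}}(1)=0$''. They cannot. Those equations cancel codimension-one strata (disc bubbles on $L$ and on $\mathcal L$) in the boundary of one-dimensional quilted moduli spaces, and that is what proves the $A_\infty$ relations for $m^{\mathrm{quilt}}$. The curvature $m_0^{\mathrm{quilt}}(1)$, by contrast, is the virtual count of the zero-dimensional moduli spaces with one output and no inputs, and no boundary-cancellation argument makes that count vanish. Thus ``$0$ is a bounding cochain for $m^{\mathrm{quilt}}$'' is essentially a restatement of the theorem, not a consequence of the hypotheses. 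The formal push-forward $\mathfrak f_*(0)=\mathfrak f_0(1)$ in step 2 is fine, but it transports nothing if step 1 fails.

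The domains, as you describe them, are also problematic. With the outer boundary already on $L'$, bubbling there produces discs in $N$ with boundary on $L'$, which neither $b$ nor $b_{\mathcal L}$ absorbs, so $m^{\mathrm{quilt}}$ would not even satisfy the $A_\infty$ relations by itself. Step 2 relies on seam-shrinking compactness and gluing that is not available in this generality, as you note. Your fallback via ``discs with boundary on $L\times\mathcal L$ meeting $\Delta_M\times N$ cleanly'' does not specify a moduli problem that would produce $\mathfrak f$.

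The missing idea is that $b'$ must be \emph{constructed} by an inductive argument rather than obtained by pushing forward $0$. The paper's remark after the theorem alludes to this: $b=b_{\mathcal L}=0$ gives $b'=0$ only if the cyclic element of the correspondence tri-module is closed. In outline, the argument runs as follows.

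$CF(L\times L',\mathcal L)$, computed in $M^-\times N$, is a filtered $A_\infty$ tri-module over $CF(L)$, $CF(\mathcal L)$ and $CF(L')$. By embeddedness, $(L\times L')\cap\mathcal L=L\times_M\mathcal L\cong L'$ is a clean intersection, and its fundamental class is a cyclic element $\mathbf 1$. At energy zero, $x\mapsto\mathfrak n(\mathbf 1;x)$ is, up to sign, the identification of $CF(L')$ with the chains on this clean intersection. Hence one can solve $\delta^{b,b_{\mathcal L},b'}(\mathbf 1)=0$ for $b'\in CF^1(L';\Lambda_+)$ order by order in the energy filtration, where $\delta^{b,b_{\mathcal L},b'}$ is the deformed tri-module differential. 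The tri-module relation applied to $\mathbf 1$, together with $m_0^{b}(1)=0=m_0^{b_{\mathcal L}}(1)$, then gives $\mathfrak n^{b,b_{\mathcal L},b'}(\mathbf 1;m_0^{b'}(1))=0$. The same leading-order injectivity yields $m_0^{b'}(1)=0$.

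This uses only quilted strips with clean-intersection ends, with no seam-shrinking and hence no figure-eight bubbles. It also determines $b'$ from $b$ and $b_{\mathcal L}$ through the equation $\delta^{b,b_{\mathcal L},b'}(\mathbf 1)=0$.
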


\begin{remark}
This can be generalised to weak bounding cochains, i.e. $b$ and $b_{\mathcal{L}}$ are weak bounding cochains, then $b'$ is also a weak bounding cochain satisfying an identity of their disc potentials. Interested readers are referred to \cite[Proposition 2.10]{lauleungli} for details. Meanwhile, Theorem \ref{unobstructedness} would suffice for applications in this paper.
\end{remark}

\begin{remark}
    Readers are cautioned that tautological unobstructedness needs not be preserved under geometric transformations: $b = 0 = b_{\mathcal{L}}$ does not imply $b' = 0$, unless the cyclic element in the correspondence tri-module is closed under the tri-module differential. See also \cite[Lemma 7.21]{gao} for relevant discussion in the exact case. 
\end{remark}

We are now ready to prove Corollary \ref{cor: exunoblag} on the unobstructedness of the geometric transformations 
in Examples \ref{example:shift-diagonal} 
and \ref{example:orbit-lagrangian} are unobstructed:

\begin{proof}[Proof of Corollary \ref{cor: exunoblag}]
For (1), the assumptions guarantee that $L_\mu$ is tautologically unobstructed by 
Corollary \ref{momentlagrangianunobstructed}; also, $Z$ is closed and connected 
(by Atiyah-Guillemin-Sternberg convexity theorem), so as $Z \times_{\bar{Y}} Z \subseteq Y^- \times Y$. 
Therefore, since $L$ is exact (and hence tautologically unobstructed), Theorem \ref{unobstructedness} implies that $\Delta_\mu$ is unobstructed.\\

For (2), since $L^t_\mu$ is also tautologically unobstructed and $\Delta_{\mathcal{O}_\lambda}$ is tautologically unobstructed by \cite[Theorem 1.9]{FOOOinvol}, Theorem \ref{unobstructedness} implies that $L_{\mathcal{O}_\lambda}$ is unobstructed.
\end{proof}

\appendix

\section{Symplectic structure of $G \times \mathfrak{g}^*$} \label{appendix:symplecticform}
Here we provide explicit formula for the symplectic form $\Omega$ on $G \times \mathfrak{g}^*$ under the identification with $(T^*G, -\omega_{can})$ via left translation, following \cite[Example 14.7]{CFM}. This was used in the proof of Theorem \ref{twisted-groupoid}. 

Given any compact connected Lie group $G$, there exists a diffeomorphism 
\begin{align*}
l: G \times \mathfrak{g}^* &\rightarrow T^*G\\
(g, \xi) &\mapsto \mathcal{L}^*_g(\xi),
\end{align*}
where $\mathcal{L}^*_g: \mathfrak{g}^* \rightarrow T^*_gG$ is dual to the inverse of the left translation $\mathcal{L}^{-1}_{g}: T_gG\rightarrow \mathfrak{g}$.

The symplectic form $\Omega$ on $G \times \mathfrak{g}^*$ is given by $\Omega = -l^*(\omega_{can})$. More explicitly, for any $(g, \xi) \in G \times \mathfrak{g}^*$ and $(Z, \eta), (W, \zeta) \in T_{(g, \xi)}(G \times \mathfrak{g}^*) \cong T_gG \times \mathfrak{g}^*$, we have
\begin{align}\label{symplecticform}
\Omega((Z, \eta), (W, \zeta)) = \langle \eta, \mathcal{L}^{-1}_{g}(W) \rangle - \langle \zeta, \mathcal{L}^{-1}_{g}(Z) \rangle - \langle \xi, [\mathcal{L}^{-1}_{g}(Z), \mathcal{L}^{-1}_{g}(W)] \rangle.
\end{align}

Using this symplectic form $\Omega$, the action groupoid $G \times \mathfrak{g}^*$ of the coadjoint $G$-action on $\mathfrak{g}^*$ is a symplectic groupoid $(G \times \mathfrak{g}^*, \Omega) \xRightarrow[Ad^*]{pr} \mathfrak{g}^*$. See e.g. \cite[Example 14.7]{CFM} for more details.

\section{Lie groupoids}\label{appendix:liegroupoid}
We follow the convention of Lie groupoids in \cite{CFM}, recalled below for reader's convenience:
\begin{defn} \cite[Definition 13.1]{CFM} \label{liegroupoid}
A Lie groupoid consists of a pair of smooth manifolds $(X, M)$ and a five-tuple of smooth maps $(s,t, m, e, i)$, where 
\begin{itemize}
\item (source and target maps) $s,t: X \rightarrow M$ are surjective submersions.
\item (multiplication map) $m: X^{(2)} \rightarrow X$ is a smooth map, where 
$$ X^{(2)} \coloneqq \{(p,q) \in X \times X: s(p) = t(q)\},$$
which is smooth since $s$ and $t$ are submersions. Denote $p\cdot q \coloneqq m(p,q)$.
\item (unit map) $e: M \rightarrow X$ is a smooth embedding.
\item (inversion map) $i: X \rightarrow X$ is a smooth involution, i.e. $i\circ i=\Id$. We write $p^{-1} \coloneqq i(p)$.
\end{itemize}
such that for each $(p,q),(q,r) \in X^{(2)}$ and $x\in M$, the following conditions are satisfied:
\begin{itemize}
\item $e$ is a section of $s$ and $t$: $s(e(x)) = x = t(e(x))$.
\item $e(t(p)) \cdot p  = p =  p \cdot e(s(p)) $
\item $s(p \cdot q) = s(q) ;  t(p \cdot q) =t(p)$
\item $(p \cdot q) \cdot r = p \cdot (q \cdot r)$.
\item $s(p^{-1}) = t(p) ; t(p^{-1}) = s(p)$.
\item $p \cdot p^{-1} = e(t(p)); p^{-1} \cdot p = e(s(p))$.
 
\end{itemize}
For simplicity, we will denote a Lie groupoid by $(X \xRightarrow[t]{s} M, m, e, i)$.\\
\end{defn}

\bibliographystyle{plain}
\bibliography{geometry}

@inproceedings {Teleman,
	AUTHOR = {Teleman, C.},
	TITLE = {Gauge theory and mirror symmetry},
	BOOKTITLE = {Proceedings of the {I}nternational {C}ongress of
	{M}athematicians---{S}eoul 2014. {V}ol. {II}},
	PAGES = {1309--1332},
	PUBLISHER = {Kyung Moon Sa, Seoul},
	YEAR = {2014},
}

@article {Oh,
    AUTHOR = {Oh, Y.-G.},
     TITLE = {Floer cohomology of {L}agrangian intersections and
              pseudo-holomorphic disks. {I}},
   JOURNAL = {Comm. Pure Appl. Math.},
  FJOURNAL = {Communications on Pure and Applied Mathematics},
    VOLUME = {46},
      YEAR = {1993},
    NUMBER = {7},
     PAGES = {949--993},
}

@BOOK{FOOO,
  title = {Lagrangian intersection {F}loer theory: anomaly and obstruction.
	{P}art {I} and {II}},
  publisher = {American Mathematical Society},
  year = {2009},
  author = {Fukaya, K. and Oh, Y.-G. and Ohta, H. and Ono, K.},
  volume = {46},
  series = {AMS/IP Studies in Advanced Mathematics},
  address = {Providence, RI},
  isbn = {978-0-8218-4836-4},
}

@ARTICLE{EL,
    AUTHOR = {Evans, J. and Lekili, Y. },
     TITLE = {Generating the {F}ukaya categories of {H}amiltonian
              {$G$}-manifolds},
   JOURNAL = {J. Amer. Math. Soc.},
  FJOURNAL = {Journal of the American Mathematical Society},
    VOLUME = {32},
      YEAR = {2019},
    NUMBER = {1},
     PAGES = {119--162},
      ISSN = {0894-0347,1088-6834},
       DOI = {10.1090/jams/909},
       URL = {https://doi.org/10.1090/jams/909},
}

@ARTICLE{Fukaya-corr,
	author = {Fukaya, K.},
	title = {{U}nobstructed immersed {L}agrangian correspondence and filtered {$A_\infty$} functor},
	journal = {preprint},
	note = {\href{https://arxiv.org/abs/1706.02131}{arxiv.org/abs/1706.02131}}
}

@book {foookuranishi,
    AUTHOR = {Fukaya, K. and Oh, Y.-G. and Ohta, H. and Ono,
              K.},
     TITLE = {Kuranishi structures and virtual fundamental chains},
    SERIES = {Springer Monographs in Mathematics},
 PUBLISHER = {Springer, Singapore},
      YEAR = {[2020] \copyright 2020},
     PAGES = {xv+638},
      ISBN = {978-981-15-5562-6; 978-981-15-5561-9},
   MRCLASS = {53D45 (32M99 53D37 53D40)},
  MRNUMBER = {4179586},
       DOI = {10.1007/978-981-15-5562-6},
       URL = {https://doi-org.easyaccess2.lib.cuhk.edu.hk/10.1007/978-981-15-5562-6},
}

@article{gao,
	Author = {Gao, Y.},
	Journal = {preprint},
	Note = {\href{https://arxiv.org/abs/1712.00225}{arXiv:1712.00225}},
	Title = {Functors of wrapped {F}ukaya categories from {L}agrangian correspondences},
}

@article{xiao2,
    AUTHOR ={Xiao, Y.},
	Journal = {preprint},
	Note = {\href{https://arxiv.org/abs/2405.11169}{arXiv:2405.11169}},
	Title = {{M}oment {L}agrangian correspondences are unobstructed after bulk deformation},
}

@book {CFM,
    AUTHOR = {Crainic, M. and Fernandes, R. L. and M\u arcu\c t,
              I.},
     TITLE = {Lectures on {P}oisson geometry},
    SERIES = {Graduate Studies in Mathematics},
    VOLUME = {217},
 PUBLISHER = {American Mathematical Society, Providence, RI},
      YEAR = {[2021] \copyright 2021},
     PAGES = {xix+479},
      ISBN = {978-1-4704-6430-1},
       DOI = {10.1090/gsm/217},
       URL = {https://doi.org/10.1090/gsm/217},
}

@article {FOOOinvol,
    AUTHOR = {Fukaya, K. and Oh, Y.-G. and Ohta, H. and Ono,
              K.},
     TITLE = {Antisymplectic involution and {F}loer cohomology},
   JOURNAL = {Geom. Topol.},
  FJOURNAL = {Geometry \& Topology},
    VOLUME = {21},
      YEAR = {2017},
    NUMBER = {1},
     PAGES = {1--106},
      ISSN = {1465-3060,1364-0380},
       DOI = {10.2140/gt.2017.21.1},
       URL = {https://doi-org.libproxy.ibs.re.kr/10.2140/gt.2017.21.1},
}

@ARTICLE{lauleungli,
  author = {Lau, S.-C. and Leung, N.C. and Li, Y. L.},
  title = {Equivariant {L}agrangian correspondence and a conjecture of {T}eleman},
journal = {preprint},
    YEAR = {2023},
	note = {\href{https://arxiv.org/abs/2312.13926}{https://arxiv.org/abs/2312.13926}}
}

@article {Pascaleffmonoidal,
    AUTHOR = {Pascaleff, J.},
     TITLE = {Poisson geometry, monoidal {F}ukaya categories, and
              commutative {F}loer cohomology rings},
   JOURNAL = {Enseign. Math.},
  FJOURNAL = {L'Enseignement Math\'ematique},
    VOLUME = {70},
      YEAR = {2024},
    NUMBER = {3-4},
     PAGES = {313--381},
      ISSN = {0013-8584,2309-4672},
   MRCLASS = {53D37 (18M05 53D17 53D40)},
  MRNUMBER = {4795722},
MRREVIEWER = {Cheuk\ Yu\ Mak},
       DOI = {10.4171/lem/1071},
       URL = {https://doi-org.libproxy.ibs.re.kr/10.4171/lem/1071},
}

@article {Weinsteinsympgeom,
    AUTHOR = {Weinstein, A.},
     TITLE = {Symplectic geometry},
   JOURNAL = {Bull. Amer. Math. Soc. (N.S.)},
  FJOURNAL = {American Mathematical Society. Bulletin. New Series},
    VOLUME = {5},
      YEAR = {1981},
    NUMBER = {1},
     PAGES = {1--13},
      ISSN = {0273-0979,1088-9485},
   MRCLASS = {58F05 (70G15)},
  MRNUMBER = {614310},
MRREVIEWER = {Gunther\ A.\ Uhlmann},
       DOI = {10.1090/S0273-0979-1981-14911-9},
       URL = {https://doi-org.libproxy.ibs.re.kr/10.1090/S0273-0979-1981-14911-9},
}

@incollection {sympgroupoid,
    AUTHOR = {Coste, A. and Dazord, P. and Weinstein, A.},
     TITLE = {Groupo\"ides symplectiques},
 BOOKTITLE = {Publications du {D}\'epartement de {M}ath\'ematiques.
              {N}ouvelle {S}\'erie. {A}, {V}ol.\ 2},
    SERIES = {Publ. D\'ep. Math. Nouvelle S\'er. A},
    VOLUME = {87-2},
     PAGES = {i--ii, 1--62},
 PUBLISHER = {Univ. Claude-Bernard, Lyon},
      YEAR = {1987},
   MRCLASS = {58F05 (17B99 20L99 58F06)},
  MRNUMBER = {996653},
MRREVIEWER = {Charles-Michel\ Marle},
}

@incollection {sympcategory,
    AUTHOR = {Weinstein, A.},
     TITLE = {The symplectic ``category''},
 BOOKTITLE = {Differential geometric methods in mathematical physics
              ({C}lausthal, 1980)},
    SERIES = {Lecture Notes in Math.},
    VOLUME = {905},
     PAGES = {45--51},
 PUBLISHER = {Springer, Berlin-New York},
      YEAR = {1982},
      ISBN = {3-540-11197-2},
   MRCLASS = {58F05 (58F06)},
  MRNUMBER = {657441},
MRREVIEWER = {Gunther\ A.\ Uhlmann},
}

@book {GSsemiclassical,
    AUTHOR = {Guillemin, V. and Sternberg, S.},
     TITLE = {Semi-classical analysis},
 PUBLISHER = {International Press, Boston, MA},
      YEAR = {2013},
     PAGES = {xxiv+446},
      ISBN = {978-1-57146-276-3},
   MRCLASS = {58-02 (53-02 53D05 53D12 58J40 81Q10)},
  MRNUMBER = {3157301},
MRREVIEWER = {Alberto\ Parmeggiani},
}

@article{Chowmomentcorr,
	Author = {C. H. Chow},
	Journal = {Preprint},
	Note = {\href{https://arxiv.org/pdf/2107.08576}{arXiv:2107.08576}},
  YEAR = {2021},
	Title = {Quantum characteristic classes, moment correspondences and the {H}amiltonian groups of coadjoint orbits},
}

@article{BCL,
	Author = {Bae, H. and Chow, C. H. and Leung, N. C.},
	Journal = {Preprint},
	Note = {\href{https://arxiv.org/pdf/2103.00382}{arXiv:2103.00382}},
  YEAR = {2021},
	Title = {Applications of the theory of Floer to symmetric spaces},
}

@article{AFOOO,
	Author = {Abouzaid, M. and Fukaya, K. and Oh, Y.-G. and Ohta, H. and Ono,
	K.},
	Journal = {Preprint},
	Note = {\href{https://arxiv.org/pdf/2606.12257}{arXiv:2606.12257}},
  YEAR = {2026},
	Title = {Quantum cohomology and split generation in {L}agrangian {F}loer theory},
}

@article {weinsteinsympgroupoid,
    AUTHOR = {Weinstein, Alan},
     TITLE = {Symplectic groupoids and {P}oisson manifolds},
   JOURNAL = {Bull. Amer. Math. Soc. (N.S.)},
  FJOURNAL = {American Mathematical Society. Bulletin. New Series},
    VOLUME = {16},
      YEAR = {1987},
    NUMBER = {1},
     PAGES = {101--104},
      ISSN = {0273-0979,1088-9485},
   MRCLASS = {58F05 (20L15 58H05)},
  MRNUMBER = {866024},
       DOI = {10.1090/S0273-0979-1987-15473-5},
       URL = {https://doi-org.libproxy.ibs.re.kr/10.1090/S0273-0979-1987-15473-5},
}
	
\end{document}